\DeclareMathOperator{\SubF}{SubF}
\DeclareMathOperator{\Dif}{Dif}
\DeclareMathOperator{\Mid}{Mid}
\newtheorem{theorem}{Theorem}[section]
\newtheorem{lem}[theorem]{Lemma}
\newtheorem{cor}[theorem]{Corollary}
\theoremstyle{definition}
\newtheorem{defn}[theorem]{Definition}
\newtheorem{ex}[theorem]{Example}
\theoremstyle{remark}
\newtheorem{rem}[theorem]{Remark}
\numberwithin{equation}{section}
\begin{document}
\title[Complete algorithms for transversals and double coset representatives
]{Complete algorithms for transversals and double coset representatives of subgroups via subfactors}
\author{M.H.Hooshmand}
\address{
Department of Mathematics, Shiraz Branch, Islamic Azad University, Shiraz, Iran}

\email{\tt hadi.hooshmand@gmail.com, MH.Hooshmand@iau.ac.ir}

\subjclass[2000]{20F99, 20D40}

\keywords{Transversal, Complete set of representatives
  of double cosets, Factor subset, Sub-factor, Sub-indices
 \indent }
\date{}

\begin{abstract}
Recently, we have introduced and studied the topic of sub-indices and sub-factors of groups.
During those studies, an algorithm for obtaining the sub-factors of a finite group was stated and proved,
which has a particular case for calculating the transversals of subgroups.
  In this work, we first show that it is a complete algorithm for obtaining all transversals. Then, motivated by it,
  we state and prove a complete general algorithm to obtain all representatives
  and the number of double cosets of subgroups (which has not been possible so far).
  Moreover, we introduce the concept of middle direct product of three subsets and several
  equivalent conditions for a subset to be a complete set of representatives
  of double cosets. Also, as another important result of the topic, we provide a definitive method
  to obtain the middle factor of groups relative to a couple of subgroups.
\end{abstract}
\maketitle
\section{Introduction and preliminaries}
\noindent
Conducting a series of research has led to various results,
 one of which is finding a complete algorithm to calculate the representatives of
 double cosets for arbitrary subgroups (without any conditions), which has not been possible so far.
 Let us write a little about those conceptions, ideas, and topics.
 When we were studying the upper periodic subsets and some functional equations on algebraic structures,
  we faced the direct product of subsets of magmas, semigroups, and groups (see \cite{Hoosh5}).
It then led to the next challenging conjecture
(see \cite{Hoosh1}, The Kourovka Notebook \cite{Khu}, no. 19(2018),
Question 19.35. \& no. 20(2022), Question 20.37, and \cite{Hoosh4}).\\
{\bf Conjecture.} For every finite group $G$ and every factorization $|G|=ab$
there exist subsets $A,B\subseteq G$ with $|A|=a$ and $|B|=b$ such that $G=AB$.\\
The conjecture  is true for all finite solvable groups, finite groups such that for every divisor $d$ of their order, there is
 a subgroup of order or index $d$, and all groups of orders $\leq 10000$. Also, its generalized form ($k$-factorization)
 is true for all super solvable groups (see \cite{Hoosh4,Bil}).\\
It is worth noting that $G=A\cdot B$ in the above conjecture, where $A \cdot B$
 denotes the direct product of subsets $A,B$ which means the representation of every
  element of $AB$ by $x=ab$  with $a\in A$, $b\in B$ is unique.
Hence, $G=A \cdot B$ if and only if $G=A B$ and the product $A B$ is direct
(a factorization of $G$ by two subsets, the additive notation is $G=A\dot{+} B$).
We call $A$ a left
factor of $G$ if and only if $G=A \cdot B$ for some $B\subseteq G$
(equivalently, there exists a right factor $B$ of $G$ relative to $A$).
For example, every subgroup is a left (resp. right) factor
relative to its right (resp. left) transversal, hence it is a two sided factor of $G$.\\
Since the product $A\{b\}$ is direct for all $b\in G$, the question that arises is how much
 can we extend this direct product? And what happens if $A$ is a left factor of $G$?
Then, we arrived at an important conception that is a generalization of factors of groups, namely sub-factors.
Also, we introduced sub-indices of arbitrary subsets of groups which can be considered as some generalizations
of the subgroups' indices (see \cite{Hoosh2, Hoosh3}). The topic has some results (for subgroups) such as the next lemma.
\begin{lem}
Let $G$ be a group, $H\leq G$ and $T\subseteq G$. Then, the following statements are equivalent:\\
$($a$)$ $T$ is a right transversal of $H$ in $G$;\\
$($b$)$ $G=H\cdot T$;\\
$($c$)$ The product $HT$ is direct and $T$ is maximal relative to the direct product $($i.e., $T$ is a right sub-factor of $G$ relative to $H$ \cite{Hoosh2},
which means $HT=H\cdot T$ and if $T\subset S$, then the product $HS$ is not direct$)$.
\end{lem}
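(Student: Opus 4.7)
The plan is to establish the three equivalences by proving (a)$\Leftrightarrow$(b) and (b)$\Leftrightarrow$(c), since the direct product condition sits naturally between the other two notions.

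The equivalence (a)$\Leftrightarrow$(b) is essentially an unpacking of definitions. A right transversal $T$ is a set of representatives for the right cosets $Hg$, one from each coset, which is the same as saying every $g\in G$ has a unique expression $g=ht$ with $h\in H$, $t\in T$; and that uniqueness of factorization through $H\times T$ onto $G$ is precisely $G=H\cdot T$. So I would simply write out this translation in one short paragraph, being careful with the direction of cosets.

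For (b)$\Rightarrow$(c), assume $G=H\cdot T$. Directness of $HT$ is part of the hypothesis. For the maximality claim I would take any $S$ with $T\subsetneq S$, pick $s\in S\setminus T$, and use $s\in G=HT$ to write $s=ht$ with $h\in H$, $t\in T$; since $s\ne t$, the two expressions $1\cdot s$ and $h\cdot t$ in $HS$ coincide but have distinct second factors, so $HS$ is not direct.

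For (c)$\Rightarrow$(b), the task is to promote ``$HT$ is direct and maximal'' to ``$HT=G$''. I would argue by contradiction: suppose there exists $g\in G\setminus HT$ and form $S=T\cup\{g\}$. I claim $HS$ is still direct, which contradicts maximality. If $h_1s_1=h_2s_2$ with $s_1,s_2\in S$, there are three cases: both $s_i\in T$ (handled by directness of $HT$); both equal to $g$ (forces $h_1=h_2$, hence $s_1=s_2$); and the mixed case $s_1=g$, $s_2=t\in T$, where $h_1g=h_2t$ would give $g=h_1^{-1}h_2t\in HT$, contradicting the choice of $g$.

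The main point to be careful about is the mixed case in the last step, and, in (a)$\Leftrightarrow$(b), keeping the left/right conventions aligned so that uniqueness of $ht$ really does correspond to the transversal condition for the right cosets $Hg$ rather than the left cosets $gH$. No further machinery beyond the definition of direct product of subsets is needed.
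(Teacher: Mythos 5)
Your proof is correct, but it takes a different route from the paper: the paper does not argue from scratch at all, it simply invokes Corollary 3.9 of \cite{Hoosh2} (a general result on sub-factors of an arbitrary subset $A$) specialized to $A:=H\leq G$, whereas you give a self-contained elementary argument directly from the definitions. Each of your three steps checks out, including the two delicate points you flag: in (b)$\Rightarrow$(c) the two factorizations $1\cdot s$ and $h\cdot t$ of $s$ do have distinct second factors, and in (c)$\Rightarrow$(b) the mixed case is exactly where the subgroup hypothesis enters (you need $h_1^{-1}h_2\in H$, i.e.\ closure under inversion and multiplication --- for a general subset $A$ in place of $H$ this step fails, which is why the paper's general machinery replaces $H$ by a ``difference set'' construction). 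What your approach buys is transparency and independence from the cited reference; what the paper's approach buys is that the same statement drops out of a framework covering arbitrary subsets, which is the setting the rest of the paper builds on. One tiny point worth making explicit in your (c)$\Rightarrow$(b) step: since $1\in H$ you have $T\subseteq HT$, so $g\notin HT$ really does give $T\subsetneq T\cup\{g\}$, which is needed before maximality can be contradicted.
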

\begin{proof}
This is immediately concluded from Corollary 3.9 of \cite{Hoosh2}, by taking $A:=H\leq G$.
\end{proof}
{\bf A complete algorithm for finding transversals of subgroups.} In \cite{Hoosh3} we have introduced
a proven algorithm for obtaining sub-factors of arbitrary subsets of groups.
Now, we can obtain a complete algorithm for right (and similarly left) transversals of subgroups from it,
namely, ``Right Transversal Algorithm'' (RTA) .
\begin{theorem}[RTA]
Let $G$ be a finite group and $H\leq G$. Then, put $H^c:=G\setminus H$,  $\mathcal{C}^{(-1)}(H):=G$,
fix $g_0\in G$, and
construct the sequences $\{g_n\}_{n\geq 0}$ and $\{\mathcal{C}^{(n)}(H)\}_{n\geq-1}$ as follows:\\
By the assumption that $g_0,\cdots , g_{n}$ and
$\mathcal{C}^{(-1)}(H),\cdots , \mathcal{C}^{(n-1)}(H)$ are defined, for an integer
 $n\geq 0$, set
\begin{equation}
\mathcal{C}^{(n)}(H):=\mathcal{C}^{(n-1)}(H)\cap H^cg_{n},
\end{equation}
 and then choose an  element $g_{n+1}$ in
$\mathcal{C}^{(n)}(H)$ if it is nonempty, and also put $B_n:=\{g_0,\cdots , g_{n+1}\}$
$($thus $B_{-1}=\{g_0\}$, $B_{n-1}\cup \{g_{n+1}\}=B_n$ and $\mathcal{C}^{(n)}(H)=\bigcap_{i=0}^{n}H^cg_{i}$ for all $n\geq 0)$.\\
Then there exists a least integer $N\geq 0$ such that $\mathcal{C}^{(N)}(H)=\emptyset$, and we have $|G:H|=N+1$,
$B:=B_{N-1}$ $($with $N+1$ elements$)$ is a right transversal of $H$ in $G$.\\
Moreover, every right transversal of $H$ can be obtained from this algorithm $($i.e., it is complete$)$.
\end{theorem}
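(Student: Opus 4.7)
The plan is to translate the sets $\mathcal{C}^{(n)}(H)$ into coset language. An element $g \in G$ lies in $H^c g_i$ iff $g g_i^{-1} \notin H$, i.e., iff $Hg \neq Hg_i$. Intersecting over $i$ gives
\[
\mathcal{C}^{(n)}(H) = \{g \in G : Hg \notin \{Hg_0, \ldots, Hg_n\}\},
\]
the union of all right cosets of $H$ not yet hit by $g_0,\dots,g_n$. This single reformulation drives the whole proof.

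With this in hand, the termination and transversal claim become mechanical. Because each $g_{n+1}$ is picked from $\mathcal{C}^{(n)}(H)$, its right coset differs from $Hg_0,\dots,Hg_n$, so the cosets $Hg_0, Hg_1,\ldots$ are pairwise distinct as long as the sequence continues. Finiteness of $|G:H|$ then forces a least $N$ with $\mathcal{C}^{(N)}(H) = \emptyset$, and the emptiness says precisely that no $g \in G$ can avoid all of $Hg_0,\dots,Hg_N$. Thus $G = \bigsqcup_{i=0}^{N} Hg_i$, giving $|G:H| = N+1$ and showing that $B = B_{N-1} = \{g_0,\dots,g_N\}$ is a set of right coset representatives, hence a right transversal (alternatively, one may re-express this as $G = H \cdot B$ and invoke Lemma 1.1).

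For completeness, given any right transversal $T$, I would enumerate its elements as $t_0,\dots,t_N$ in an arbitrary order and verify that the RTA run with choices $g_i := t_i$ is legitimate. Pairwise distinctness of the cosets $Ht_0,\dots,Ht_N$ gives $t_{n+1} \in H^c t_i$ for every $i \le n$, hence $t_{n+1} \in \mathcal{C}^{(n)}(H)$, so each choice is permitted. After $N$ steps every coset is represented and the coset description of $\mathcal{C}^{(N)}(H)$ forces it to be empty, so the algorithm halts and reproduces $T$.

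I do not expect a genuine obstacle here; the coset identity is essentially the whole content of the theorem. The only care needed is indexing (since the convention $B_n = \{g_0,\dots,g_{n+1}\}$ shifts the relationship between $N$ and $|B|$) and the remark that a strictly decreasing chain of subsets of the finite set $G$ must reach $\emptyset$ in finitely many steps.
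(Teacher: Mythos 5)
Your proposal is correct, and the key observation --- that $g\in H^cg_i$ iff $Hg\neq Hg_i$, so that $\mathcal{C}^{(n)}(H)$ is exactly the union of the right cosets not yet represented by $g_0,\dots,g_n$ --- is sound and does carry the whole argument. Where you differ from the paper is in the first half: the paper does not reprove termination, the count $|G:H|=N+1$, or the transversal property at all, but imports them from Theorem 4.1 of \cite{Hoosh3} together with Lemma 1.1 (the sub-factor characterization of transversals), whereas you give a direct, self-contained coset-counting argument. That buys independence from the sub-factor machinery at essentially no cost, since the coset reformulation is elementary. For the completeness half the two arguments are substantively the same --- both verify inductively that $x_{n+1}\in\bigcap_{i=0}^{n}H^cx_i=\mathcal{C}^{(n)}(H)$, the paper phrasing this via directness of the product $H(\{x_0,\dots,x_n\}\cup\{x_{n+1}\})$ and you via distinctness of cosets --- but you close slightly differently: the paper concludes $X\subseteq B_{N-1}$ and invokes the maximality clause of Lemma 1.1 to force equality, while you observe directly that once all of $T$ has been fed in, every coset is covered, so $\mathcal{C}^{(N)}(H)=\emptyset$ and the run outputs exactly $T$. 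Both closings are valid; yours again avoids the appeal to Lemma 1.1. The only point deserving a word of care is the one you already flag: in the completeness argument you index $T$ by $t_0,\dots,t_N$ from the outset, which presupposes $|T|=N+1$; it is cleaner to write $|T|=m+1$, show the run terminates at step $m$, and conclude $m=N$, exactly as the paper does with its $m$.
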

\begin{proof}
The first part of the proof is obtained form Theorem 4.1 of \cite{Hoosh3} and Lemma 1.1, by taking $A:=H\leq G$.\\
Now, let $X$ be an arbitrary right transversal of $H$ in $G$ and represent its members by $X=\{x_0,\cdots , x_{m}\}$ where $m=|X|-1$
(thus $m\geq 0$). In the algorithm choose $g_0:=x_0$ (since $g_0$ is arbitrary).
If $H=G$, then $|X|=1$, $N=0$ and
$X=\{x_0\}=B=B_{-1}$ thus we are done. Otherwise, suppose that $g_0,\cdots,g_n$ take
the values $x_0,\cdots,x_n$, respectively,  for
$n<m$. Since the product $H (\{x_0,\cdots , x_{n}\}\cup \{x_{n+1}\})$ is direct,
$x_{n+1}\neq hx_i$, for all $h\in H$ and $i=0,\cdots,n$. Hence
$$
x_{n+1}\in \bigcap_{i=0}^nH^cx_i=\bigcap_{i=0}^nH^cg_i=\mathcal{C}^{(n)}(H)
$$
So $g_{n+1}$ can take the value $x_{n+1}$ in the algorithm process.
Therefore $X\subseteq B_{N-1}$ and Lemma 1.1 requires $X=B_{N-1}$ (and so $N=m$).
\end{proof}

\begin{ex}
Consider the additive group $G:=\mathbb{Z}_{12}$, $H=\{0,3,6,9\}\leq G$, and $g_0:=0$ in RTA.
Since $g_1\in H^c$ we can choose $g_1:=1$. Then
$$
g_2\in H^c\cap (H^c+1)=\{2,5,8,11\}.
$$
If $g_2:=2$, then $\mathcal{C}^{(2)}(H)=\{2,5,8,11\}\cap (H^c+2)=\emptyset$. Therefore,
$|G:H|=N+1=3$ and $T=\{0,1,2\}$ is a transversal for $H$ in $G$.
Note that by making all possible selections for $g_n$'s (in RTA), all tranversals of $H$ can be obtained.
\end{ex}
\section{Complete algorithms for representatives of double cosets and middle factors}
Motivated by RTA and Lemma 1.1, we are thinking about some equivalent
conditions for a subset to be a complete set of representatives of double cosets of a couple of
subgroups, and also
a complete algorithm that enables us to obtain all such representatives.
For this purpose, we need the direct product of three subsets $AXB$, denoted by $A\cdot X\cdot B$,
that is defined similar to the direct product of two subsets (see \cite{Hoosh3}). But, during recent studies,
we have found that there is another conception weaker than $A\cdot X\cdot B$
which plays an important role regarding representatives of double cosets.
\begin{defn}
We call the product $AXB$ middle direct (or direct relative to $X$) and denote it
by $A\dot{X}B$  if the equality $axb=a'x'b'$
requires $x=x'$, for all $a,a'\in A$, $x,x'\in X$ and $b,b'\in B$.
\end{defn}
If the product $AXB$ is direct then it is middle direct but the converse is not true.
For instance $A\{x\}B$ is always middle direct (for all $x$ in $G$) and it is not direct
in general (e.g., consider $x=a^2$, $A:=H$ and $B:=K$ in Example 2.14 at the end of this paper). Hence, we put
$$\Mid_G(A,B):=\{g\in G: AgB=A\cdot \{g\}\cdot B\},$$
and call it middle director of $(A,B)$ in $G$. Since $A\cdot X\cdot B$ implies
$A\cdot x\cdot B$, for all $x\in X$, and  the product $A\emptyset B$ is always direct (here $X$
is empty),
\begin{equation}
\Mid_G(A,B)=\emptyset \Leftrightarrow  AXB=A\cdot X\cdot B \mbox{ requires }X=\emptyset \mbox{, for every subset $X$}
\end{equation}
Note that the product $AB$ is direct if and only if $1\in \Mid_G(A,B)$, and if $AB=A\cdot B$, then
$$
\{x\in G: A\subseteq A^x\}\cup \{x\in G: B^x\subseteq B\}\subseteq \Mid_G(A,B),
$$
where $A^x=xAx^{-1}$.
Hence, if $A$ or $B$ is a normal subset of $G$, then
$$
\Mid_G(A,B)\neq\emptyset
\Leftrightarrow \Mid_G(A,B)=G \Leftrightarrow  AB=A\cdot B\Leftrightarrow 1\in \Mid_G(A,B)
$$
For if $A$ is a normal subset of $G$, $x_0\in \Mid_G(A,B)\neq\emptyset$ and $a_1xb_1=a_2xb_2$
for $a_1,a_2\in A$, $x\in G$ and $b_1,b_2\in B$, then there exist $a_1',a_2'\in A$ such that
$xa_1'b_1=xa_2'b_2$ and so there are $a_1'',a_2''\in A$ such that $a_1''x_0b_1=a_2''x_0b_2$
which implies $b_1=b_2$. Therefore, the product $AxB$ is direct and hence $x\in \Mid_G(A,B)$.
Thus, in abelian groups, $\Mid_G(A,B)$ is either empty or the whole $G$.
It is easy to check that
\begin{equation}
AXB=A\cdot X\cdot B \Leftrightarrow AXB=A\dot{X}B \mbox{ and } X\subseteq \Mid_G(A,B)
\end{equation}
 for every subsets $A,B,X$.\\
It is obvious that if $H,K$ are subgroups of $G$, then
\begin{equation}
H\{x\}K \mbox{ is direct }\Leftrightarrow x\in \Mid_G(H,K)\Leftrightarrow H\cap K^x=\{1\}\Leftrightarrow H^{x^{-1}}\cap K=\{1\}
\Leftrightarrow Hx\cap xK=\{x\}
\end{equation}
Now, we prove some (basic) equivalent conditions for a subset $X$ to be a complete set of representatives of double cosets
in arbitrary groups.
\begin{lem}
Let $H,K\leq G$ and $X\subseteq G$. Then, the following statements are equivalent:\\
$($a$)$ $X$ is a complete set of representatives of double cosets of $(H,K)$ in $G$;\\
$($b$)$ $G=H\dot{X}K$;\\
$($c$)$ $HXK=H\dot{X}K$ and $X$ is maximal relative to the middle direct product.
\end{lem}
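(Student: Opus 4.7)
The plan is to establish the cycle (a) $\Leftrightarrow$ (b) $\Leftrightarrow$ (c), following exactly the template of Lemma~1.1 but with middle-directness in place of full directness. The equivalence (a) $\Leftrightarrow$ (b) is essentially a translation of definitions: $X$ is a complete set of representatives of double cosets if and only if $G$ decomposes as a disjoint union $\bigsqcup_{x\in X} HxK$. The covering part is exactly $G=HXK$, while the pairwise disjointness of $\{HxK\}_{x\in X}$ says that every equality $hxk=h'x'k'$ with $x,x'\in X$ forces $HxK=Hx'K$ and hence $x=x'$; this is precisely middle-directness of $HXK$. Conversely, middle-directness on $X$ gives distinctness of the double cosets indexed by $X$. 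Combining with the covering condition yields (a) $\Leftrightarrow$ (b).

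For (b) $\Rightarrow$ (c), from $G=H\dot{X}K$ it is immediate that $HXK=G=H\dot{X}K$. For maximality, suppose $X\subsetneq S\subseteq G$ and pick $s\in S\setminus X$. Since $G=HXK$, I may write $s=hxk$ for some $h\in H$, $x\in X$, $k\in K$, and necessarily $x\neq s$ because $s\notin X$. Then the two representations $s=1\cdot s\cdot 1=h\cdot x\cdot k$ use different middle terms from $S$, so $HSK$ fails to be middle direct.

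The direction (c) $\Rightarrow$ (b) is where the content lies, and I would handle it by a one-element extension. Suppose towards contradiction that $HXK\neq G$, pick any $g\in G\setminus HXK$, and set $S=X\cup\{g\}$. To contradict maximality, I would verify that $HSK$ is still middle direct. Given $hsk=h's'k'$ with $s,s'\in S$, I split into cases: if $s,s'\in X$, the hypothesis $HXK=H\dot{X}K$ gives $s=s'$; if $s=s'=g$, there is nothing to prove; and in the mixed case, say $s=g$ and $s'\in X$, rearranging yields $g=h^{-1}h's'k'k^{-1}\in HXK$, contradicting the choice of $g$. Hence $HSK$ is middle direct while $X\subsetneq S$, violating maximality. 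Consequently $HXK=G$, and together with $HXK=H\dot{X}K$ this gives (b).

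I expect no single hard step; the only point that requires care is the case analysis in (c) $\Rightarrow$ (b), where middle-directness must be shown to transfer from $X$ to the extended set $S$. This passes precisely because an element outside $HXK$ cannot appear as the middle term of any product that already lies in $HXK$, so the new element cannot create a collision with the old ones. The rest is bookkeeping around the definitions of $H\dot{X}K$ and of a complete set of double coset representatives.
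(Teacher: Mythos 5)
Your proposal is correct and follows essentially the same route as the paper: the decisive step, showing that an element $g\notin HXK$ can be adjoined to $X$ without destroying middle-directness (and conversely that any $g\in HXK\setminus X$ does destroy it), is exactly the paper's auxiliary relation used to pass between maximality and the covering condition $G=HXK$. The remaining implications are the same definitional unwinding in both arguments, merely organized as (a)$\Leftrightarrow$(b) plus (b)$\Leftrightarrow$(c) instead of the paper's cycle (a)$\Rightarrow$(b)$\Rightarrow$(c)$\Rightarrow$(a).
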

\begin{proof}
First note that if  $HXK=H\dot{X}K$, then
\begin{equation}
\big(\forall g\in X^c\big)\Big(H (X\cup\{g\})K\mbox{ is middle direct }\Leftrightarrow
g\in (HXK)^c\Big)
\end{equation}
For if $g\in HXK$ and $g\neq x$ for all $x\in X$, then $HgK=HxK$  and so $H (X\cup\{g\})K$ is not middle direct.
Conversely, if the product $H (X\cup\{g\})K$ is not middle direct, then the assumption $HXK=H\dot{X}K$
requires that $h_1gk_1=h_2xk_2$ for some $h_1,h_2\in H$, $x\in X$ and $k_1,k_2\in K$ which means
$g\in HxK\subseteq HXK$.\\
Now, if (a) holds and $h_1x_1k_1=h_2x_2k_2$ for $h_1,h_2\in H$, $x_1,x_2\in X$ and $k_1,k_2\in K$,
then $x_1\in Hx_2K_2$ and so $x_1=x_2$ (by the definition of $X$). Hence we arrive at (b).\\
Also, if (b) is satisfied, then $(2.4)$ implies that $H (X\cup\{g\})K$ is not middle direct for every
$g\in X^c$ (because $(HXK)^c=\emptyset$). Thus (c) is obtained.
Finally, if (c) is true, then all members of the family  $\{HxK:x\in X\}$ are disjoint and $G=HXK$
(for if $g\in (HXK)^c$ then we get a contradiction by $(2.4)$), and hence
the proof is complete.
\end{proof}
\begin{rem}
Due to the sub-factor and sub-index theory of groups and  the above definition and lemma,
we propose the title ``middle transversal'' instead of ``a complete set of representatives of double cosets''.
Thus, $X$ is a middle transversal of a couple of subgroups $(H,K)$ if and only if $G=H\dot{X}K$
(we have introduced and studied middle sub-factors of an arbitrary couple
of subsets in another paper, as a continuation of the sub-factor theory of groups in \cite{Hoosh2, Hoosh3}).
This agrees with the concepts of left and right transversals, sub-factors, factors, and related factorization
in groups (see \cite{Hoosh2, Hoosh3}).
\end{rem}
Now, we are ready to establish the first complete algorithm for obtaining
complete sets of double cosets representatives of subgroups, namely the ``Middle Transversal Algorithm'' (MTA), without
any conditions. One can get some information about finding double coset representatives from \cite{Holt} (page 131).
\begin{theorem}[MTA]
Let $G$ be a finite group and $H,K$ subgroups of $G$.
Put  $\mathcal{C}^{(-1)}(H,K):=G$ and fix $g_0\in \mathcal{C}^{(-1)}(H,K)$.
Then, construct the finite sequences $\{g_n\}_{n\geq 0}$ and $\{\mathcal{C}^{(n)}(H,K)\}_{n\geq-1}$ as follows:\\
By the assumption $g_0,\cdots , g_{n}$ and $\mathcal{C}^{(-1)}(H,K),\cdots , \mathcal{C}^{(n-1)}(H,K)$ are defined, for an integer
 $n\geq 0$, set
\begin{equation}
\mathcal{C}^{(n)}(H,K):=\mathcal{C}^{(n-1)}(H,K)\cap (Hg_nK)^c,
\end{equation}
 and then choose an  element $g_{n+1}$ in
$\mathcal{C}^{(n)}(H,K)$ if it is nonempty, and also put $X_n:=\{g_0,\cdots , g_{n+1}\}$
$($thus $X_{-1}=\{g_0\}$, $X_{n-1}\cup \{g_{n+1}\}=X_n$, and
$\mathcal{C}^{(n)}(H,K)=\bigcap_{i=0}^{n}(Hg_{i}K)^c$ for all $n\geq 0)$.\\
Then there exists a least integer $N\geq 0$ such that $\mathcal{C}^{(N)}(H,K)=\emptyset$
$($equivalently $H X_{N-1} K=G$$)$, the number of double cosets is $N+1$,
and $X:=X_{N-1}$ $($with $N+1$ elements$)$ is a complete set of double coset representatives of $(H,K)$ in $G$.
Moreover, every middle transversal of $(H,K)$ can be gotten from this algorithm.
\end{theorem}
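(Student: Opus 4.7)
The plan is to follow the template of the RTA proof (Theorem 1.2) step by step, with right cosets $Hg_i$ replaced by double cosets $Hg_iK$ and the direct product $H\cdot T$ replaced by the middle direct product of Lemma 2.3.

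First I would verify termination and the stated reformulation of the stopping condition. At each step $\mathcal{C}^{(n)}(H,K)=\mathcal{C}^{(n-1)}(H,K)\cap(Hg_nK)^c$ is strictly smaller than $\mathcal{C}^{(n-1)}(H,K)$, since $g_n\in Hg_nK$ is discarded. Because $G$ is finite, a least $N\geq 0$ with $\mathcal{C}^{(N)}(H,K)=\emptyset$ must exist, and unfolding the recursion gives $\mathcal{C}^{(N)}(H,K)=\bigcap_{i=0}^{N}(Hg_iK)^c=G\setminus\bigcup_{i=0}^{N}Hg_iK$ by De Morgan, so the stopping condition is indeed equivalent to $G=HX_{N-1}K$. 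Moreover, by construction $g_{n+1}\in\bigcap_{i=0}^{n}(Hg_iK)^c$, so $Hg_{n+1}K\neq Hg_iK$ for every $i\leq n$. Since distinct double cosets are disjoint, the family $\{Hg_iK\}_{i=0}^{N}$ is a partition of $G$; thus $X:=X_{N-1}$ is a complete set of representatives of the double cosets of $(H,K)$, the number being $|X|=N+1$. Invoking Lemma 2.3 (equivalence of (a) and (b)) translates this into $G=H\dot{X}K$.

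For the completeness assertion, I would let $Y=\{y_0,\ldots,y_m\}$ be an arbitrary middle transversal of $(H,K)$ and show that the algorithm can reproduce it by suitable choices. Choose $g_0:=y_0$, and argue by induction: if $g_0,\ldots,g_n$ have been taken equal to $y_0,\ldots,y_n$ for some $n<m$, then pairwise disjointness of the double cosets $Hy_iK$ (which follows from $Y$ being a middle transversal, again via Lemma 2.3) forces $y_{n+1}\notin\bigcup_{i\leq n}Hg_iK$, i.e. $y_{n+1}\in\mathcal{C}^{(n)}(H,K)$, so $g_{n+1}$ is allowed to take the value $y_{n+1}$. This yields $Y\subseteq X_{N-1}$; the preceding paragraph then identifies both $Y$ and $X_{N-1}$ with the same partition of $G$ into double cosets, forcing $Y=X_{N-1}$ and $m=N$.

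I do not expect any single step to be a serious obstacle: most of the work is carried out by the combinatorial content already encoded in the recursion for $\mathcal{C}^{(n)}(H,K)$, together with Lemma 2.3, which converts the partition produced by the algorithm into the middle-direct-product characterization of a middle transversal. The only points requiring a little care are the De Morgan reformulation in step one and the verification that the inductive choice $g_{n+1}:=y_{n+1}$ is legitimate, both of which are routine.
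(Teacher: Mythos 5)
Your proposal is correct, and its overall skeleton (termination of the strictly decreasing chain $\mathcal{C}^{(n)}(H,K)$, correctness of the output $X_{N-1}$, completeness by reproducing an arbitrary transversal through suitable choices of the $g_n$) matches the paper's proof. The one genuine difference is in how the middle step is carried out. The paper works entirely inside the middle-direct-product formalism: it proves by induction, via relation $(2.4)$, that $HX_nK$ is middle direct whenever $\mathcal{C}^{(n)}(H,K)\neq\emptyset$, then shows $X_{N-1}$ is maximal with respect to that property, and finally invokes the implication (c)$\Rightarrow$(a) of Lemma 2.2 to conclude that $X_{N-1}$ is a complete set of representatives. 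You instead argue directly with the classical fact that double cosets are pairwise equal or disjoint: since $g_{n+1}\in\bigcap_{i\le n}(Hg_iK)^c$, the sets $Hg_0K,\dots,Hg_NK$ are pairwise disjoint, and the stopping condition makes them cover $G$, so they partition $G$ and $X_{N-1}$ is a complete set of representatives by definition; Lemma 2.2 (which you call Lemma 2.3 --- in the paper the relevant statement is Lemma 2.2) is needed only at the very end, to translate this into $G=H\dot{X}K$, and again in the completeness step to pass from ``middle transversal'' back to ``pairwise disjoint double cosets covering $G$.'' Your route is the more elementary one and bypasses relation $(2.4)$ almost entirely; the paper's route has the advantage of exercising the machinery that is reused verbatim in Theorem 2.10 (MSFA), where the double cosets $Hg_iK$ restricted to $\Mid_G(H,K)$ no longer partition the whole group and the partition shortcut is unavailable. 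Both arguments are complete and correct.
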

\begin{proof}
If $Hg_{0}K=G$, then putting $N=0$ and
$X=X_{-1}=\{g_0\}$, the conditions hold.
Now, let $Hg_{0}K\neq G$, then:\\
(a) All elements $g$ in this algorithm are distinct since $g_{n+1}\notin Hg_{i}K\supseteq \{g_i\}$ for every $0\leq i\leq n$.
\\
(b) If $\mathcal{C}^{(n)}(H,K)\neq\emptyset$, for some $n\geq 0$, then the product $HX_nK$ is middle direct.
 Because the product $HX_0K$ is middle direct and
if $HX_iK$ is middle direct, for all $0\leq i\leq n-1$, then using $(2.4)$ we have
$$
H X_{n}K\mbox{ is middle direct }\Leftrightarrow H (X_{n-1}\cup\{g_{n+1}\})K\mbox{ is middle direct }
$$
$$
\Leftrightarrow
g_{n+1}\notin HX_{n-1}K\Leftrightarrow g_{n+1}\in (HX_{n-1}K)^c=\mathcal{C}^{(n)}(H,K)
$$
Also, $(2.5)$ implies
$\mathcal{C}^{(n)}(H,K)\subset \mathcal{C}^{(n-1)}(H,K)$, since
$$
g_n\in \mathcal{C}^{(n-1)}(H,K)\neq\emptyset\; , \;
g_n\notin \bigcap_{i=0}^{n}(Hg_{i}K)^c=\mathcal{C}^{(n)}(H,K)
$$
Therefore, we have the following strictly decreasing chain of subsets
$$
\mathcal{C}^{(-1)}(H,K)\supset \mathcal{C}^{(0)}(H,K)\supset \cdots
$$
that surly ends to the empty since $G$ is finite.\\
(c) There exists a least non-negative integer $N$ such that $\mathcal{C}^{(N)}(H,K)=\emptyset$. Indeed
\begin{equation}
G=\mathcal{C}^{(-1)}(H,K)\supset \cdots \supset\mathcal{C}^{(N)}(H,K)=\emptyset.
\end{equation}
We claim that $X:=X_{N-1}$
is a set of double coset representatives of $(H,K)$.
Because the product $HX_{N-1}K$ is middle direct, by part(b), and if $g\in X^c_{N-1}$, then
$$
H (X_{N-1}\cup\{g\})K \mbox{ is middle direct } \Leftrightarrow
g\in \bigcap_{x\in X_{N-1}}(HxK)^c=\bigcap_{i=0}^{N}(Hg_{i}K)^c=\mathcal{C}^{(N)}(H,K)
=\emptyset
$$
Hence the claim is proved due to $(2.4)$ and Lemma 2.2.\\
(d) Now, consider an arbitrary middle transversal of $G$ relative to $(H,K)$ that we assume as
$\chi=\{x_0,\cdots , x_{m}\}$, for some integer $m\geq 0$. Choose $g_0:=x_0$ in MTA.
Suppose that  $g_0:=x_0,\cdots , g_n:=x_n$, for $0\leq n\leq m-1$ (if $m\geq 1$). Since $H (\{x_0,\cdots , x_{n}\}\cup\{x_{n+1}\})K$
is middle direct
and $x_{n+1}\in \{x_0,\cdots , x_{n}\}^c$, we have
$x_{n+1}\in \bigcap_{i=0}^{n}(Hx_{i}K)^c\neq\emptyset$ (by $(2.4)$). Thus one can choose
$g_{n+1}:=x_{n+1}$ in the algorithm. This shows that $m\leq N$. But the maximality of
$\chi=\{x_0,\cdots , x_{m}\}$ respect to the middle direct product requires that $\chi=X_{N-1}$ and $N=m$
(by Lemma 2.2).
\end{proof}
For a useful example for MTA, $G$ should be non-abelian and neither $H$ nor $K$ be normal.
Otherwise, it is like we have done RTA for the subgroup $HK$ and so $N+1=|G:HK|$.
\begin{ex}
Let $$G:=D_{12}\cong \langle a,b|a^6=1=b^2 , bab=a^{-1}\rangle=\{1,a,a^2,a^3,a^4,a^5,b,ba,ba^2,ba^3,ba^4,ba^5\},$$
and consider the subgroups $H=\{1,a^3,ba^3,b\}$ and $K=\{1,a^3,ba,ba^4\}$. Choosing $g_0=1$
in MTA, we have $Hg_0K=HK=\{1,a,a^2,a^3,a^4,b,ba,ba^3,ba^4\}$ and so $g_1\in \mathcal{C}^{(0)}(H,K)=(HK)^c=\{a^2,a^5,ba^2,ba^5\}$.
If put $g_1=a^2$, then $Hg_1K=\{a^2,a^5,ba^2,ba^5\}=(HK)^c$ and hence $\mathcal{C}^{(1)}(H,K)=(HK)^c\cap (Ha^2K)^c=\emptyset$.
Therefore, the number of double cosets is $N+1=2$ and $X=X_0=\{1,a^2\}$ is a complete set of double coset representatives of $(H,K)$,
and we have the following middle direct (product) representation for $D_{12}$
  $$D_{12}=\{1,a^3,ba^3,b\}\dot{\{1,a^2\}}\{1,a^3,ba,ba^4\},$$
although the product is not direct (this also shows that $\Mid_G(H,K)\neq D_{12}$).
Thus $\{1,a^5\}$, $\{1,ba^2\}$ and $\{1,ba^5\}$ are also middle transversals of
$(\{1,a^3,ba^3,b\},\{1,a^3,ba,ba^4\})$ in $D_{12}$.

Now, for calculating $\Mid_G(H,K)$, note that
$H\cap K=\{1,a^3\}= Z(D_{12})$ implies $Hx\cap xK\supseteq \{x,a^3x=xa^3\}$, for all $x\in G$. Therefore
$\Mid_G(H,K)=\emptyset$.
\end{ex}
Up to now, we have introduced some equivalent conditions and a complete algorithm for finding a subset $X$ such that
$G=H\dot{X}K$.
Now, some essential questions arise as follows:\\
($Q_1$) Are there some necessary and sufficient conditions for such $X$ to be a middle factor of $G$ relative to $(H,K)$
(i.e., $G=H\cdot X\cdot K$)?\\
($Q_2$) Is there a complete algorithm for obtaining all middle factors?\\
($Q_3$) Can we extend every middle factor to a middle transversal?\\
First of all, we observe that $\Mid_G(H,K)$ plays a basic role for answering the questions.
\begin{lem} For all subgroups $H,K$ of an arbitrary group $G$,
the followings are equivalent:\\
$($a$)$ $\Mid_G(H,K)=G$;\\
$($b$)$ $G=H\cdot X\cdot K$ for every complete set of  representatives of double cosets $X$ of $(H,K)$ in $G$;\\
$($c$)$ $G=H\cdot X\cdot K$ for some complete set of  representatives of double cosets $X$ of $(H,K)$ in $G$;\\
$($d$)$  $G=H\cdot X\cdot K$ for some $X\subseteq G$ $($i.e. there is a middle factor of $G$ relative to $(H,K))$.\\
\end{lem}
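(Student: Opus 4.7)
The plan is to establish the cycle of implications $(a)\Rightarrow (b)\Rightarrow (c)\Rightarrow (d)\Rightarrow (a)$. Two of the four arrows are essentially bookkeeping, one reduces to earlier results of the excerpt, and the genuine content lies in $(d)\Rightarrow (a)$.

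\textbf{$(a)\Rightarrow (b)$.} Suppose $\Mid_G(H,K)=G$ and let $X$ be any complete set of representatives of double cosets of $(H,K)$ in $G$. By Lemma 2.2 we have $G=H\dot{X}K$, and in particular $HXK=H\dot{X}K$. Because (a) gives $X\subseteq G=\Mid_G(H,K)$, the equivalence (2.3) upgrades the middle direct product to a full direct product, yielding $G=HXK=H\cdot X\cdot K$.

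\textbf{$(b)\Rightarrow (c)$ and $(c)\Rightarrow (d)$.} The first is trivial once we note that a complete set of representatives of double cosets of $(H,K)$ always exists (pick one element from each double coset); the second is immediate since such an $X$ is in particular a subset of $G$.

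\textbf{$(d)\Rightarrow (a)$.} This is the main obstacle. Assume $G=H\cdot X\cdot K$ for some $X\subseteq G$, so every element of $G$ has a unique representation $hxk$ with $h\in H$, $x\in X$, $k\in K$. I need to show that for an arbitrary $g\in G$ the product $HgK$ is direct, i.e. $g\in \Mid_G(H,K)$. Write the unique decomposition $g=h_0x_0k_0$. Given any equality $h_1gk_1=h_2gk_2$ with $h_i\in H$, $k_i\in K$, substitute to obtain
\begin{equation*}
(h_1h_0)\,x_0\,(k_0k_1)=(h_2h_0)\,x_0\,(k_0k_2).
\end{equation*}
Both sides are expressions of the same element of $G$ in the form $H\cdot X\cdot K$, and since that product is direct, the $H$- and $K$-components must agree: $h_1h_0=h_2h_0$ and $k_0k_1=k_0k_2$. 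Cancelling in the groups $H$ and $K$ gives $h_1=h_2$ and $k_1=k_2$, so $H\{g\}K$ is direct. Since $g\in G$ was arbitrary, $\Mid_G(H,K)=G$.

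The only subtle step is the last one: one must resist the temptation to use $x_0$ as a free element of $X$ in a new decomposition. The key observation is that the displayed equation above produces \emph{two} $H X K$-decompositions of the \emph{same} group element, both with middle letter $x_0\in X$, so uniqueness in $H\cdot X\cdot K$ applies directly and extracts all three coordinates. Once this is seen, the rest is routine.
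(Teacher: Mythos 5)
Your proposal is correct and follows essentially the same route as the paper: the paper also handles $(a)\Rightarrow(b)$ via Lemma 2.2 together with the equivalence $AXB=A\cdot X\cdot B \Leftrightarrow AXB=A\dot{X}B$ and $X\subseteq\Mid_G(A,B)$, and proves $(d)\Rightarrow(a)$ by exactly your substitution $g=h_gx_gk_g$ and cancellation argument. The only slip is a citation label: the ``upgrade'' equivalence you invoke is the paper's $(2.2)$, not $(2.3)$.
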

\begin{proof}
The part (a) implies (b), by $(2.2)$ and Lemma 2.2 , and $(b)\Rightarrow (c) \Rightarrow(d)$ is obvious.
Now that (d) is satisfied and $g\in G$, then  $G=HXK$ requires $g=h_gx_gk_g$ for some
$h_g\in H$, $x_g\in X$ and $k_g\in K$. If $h_1gk_1=h_2gk_2$ for
 $h_1,h_2\in H$ and $k_1,k_2\in K$, then
 $
(h_1h_g)x_g(k_gk_1)=(h_2h_g)x_g(k_gk_2)
 $
 and
 $H\cdot X\cdot K$ implies that $h_1h_g=h_2h_g$ and $k_gk_1=k_gk_2$ which means $g\in\Mid_G(H,K)$.
 This ends the proof.
\end{proof}
\begin{rem} The above important lemma together with $(2.1)$ show that there are three main cases for $\Mid_G(H,K)$ as follows:\\
(Case 1)  $\Mid_G(H,K)=\emptyset$: we do not have any non-trivial direct product $H\cdot X\cdot K$.\\
(Case 2) $\Mid_G(H,K)=G$: not only we have middle factors $X$ of $G$ relative to $(H,K)$ (i.e., $G=H\cdot X\cdot K$;)
but also the next lemma characterizes all such subsets $X$.\\
(Case 3) $\emptyset\subset \Mid_G(H,K)\subset G$: the group $G$ must be non-abelian, neither $H$ nor $K$
be normal in $G$, and $\gcd(|H|,|K|)\neq 1$ if $G$ is finite
 (see Corollary 2.9). In this case, there are non-empty maximal subsets $X$ such that $HXK=H\cdot X\cdot K$ but
$HXK\neq G$, and so $X$ is not surely a complete set of  representatives of double cosets of $(H,K)$ in $G$.
Of course, we hope to extend such subset $X$ to a $X_*$ with $G=H\dot{X_*}K$ (see Remark 2.3).\\
Hence, we first should find maximal (non-empty) subsets $X$ relative to the property $HXK=H\cdot X\cdot K$, and then
discuss necessary and sufficient conditions for occurring the equality $HXK=G$.
\end{rem}
The next lemma introduces some equivalent conditions for a subset to be a middle factor relative
to a couple of subgroups.
\begin{lem}
Let $H,K\leq G$ and $X\subseteq G$. Then, the following statements are equivalent:\\
$($a$)$ $G=H\cdot X\cdot K$;\\
$($b$)$ $X$ is a complete set of  representatives of double cosets of $(H,K)$ in $G$ and $HXK=H\cdot X\cdot K$;\\
$($c$)$ $G=H\dot{X}K$ and $\Mid_G(H,K)=G$;\\
$($d$)$ $HXK=H\cdot X\cdot K$, $X$ is $($non-empty and$)$ maximal relative to the direct product, and $X^c\subseteq\Mid_G(H,K)$;\\
$($e$)$ $HXK=H\dot{X}K$, $X$ is maximal relative to the middle direct product, and $\Mid_G(H,K)=G$.
\end{lem}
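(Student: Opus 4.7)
The plan is to prove the five-way equivalence via short jumps rather than a single cycle, leveraging (2.2), (2.4), Lemma 2.2, and Lemma 2.6. First, (a) $\Leftrightarrow$ (b) is essentially definitional: $G = H \cdot X \cdot K$ implies the weaker $G = H\dot{X}K$, which by Lemma 2.2 identifies $X$ as a complete set of representatives, while the direct-product equation $HXK = H \cdot X \cdot K$ is already built into (a); the converse combines $HXK = H \cdot X \cdot K$ with $HXK = G$ (immediate from $X$ being complete). Next, (a) $\Leftrightarrow$ (c) follows from (2.2) and Lemma 2.6: by (2.2), $HXK = H \cdot X \cdot K$ splits as $HXK = H\dot{X}K$ together with $X \subseteq \Mid_G(H,K)$, and if the left-hand side equals $G$ then Lemma 2.6 (d)$\Rightarrow$(a) forces $\Mid_G(H,K) = G$, giving (c); conversely, $\Mid_G(H,K) = G$ yields $X \subseteq \Mid_G(H,K)$ for free, and (2.2) lifts $G = H\dot{X}K$ to (a). The equivalence (c) $\Leftrightarrow$ (e) is handled by (2.4): under (c), $(HXK)^c = \emptyset$ prevents any superset of $X$ from being middle direct, giving maximality, while under (e), if $HXK \neq G$ then (2.4) applied to any $g \in (HXK)^c$ produces a middle-direct extension of $X$, contradicting maximality.

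The main step is (a) $\Leftrightarrow$ (d). The forward direction is immediate: Lemma 2.6 gives $\Mid_G(H,K) = G$, hence $X^c \subseteq \Mid_G(H,K)$, and maximality with respect to the direct product is automatic since $HXK = G$ forces any added element into a collision. For (d) $\Rightarrow$ (a), assume (d) and suppose toward contradiction that $g \in (HXK)^c$; the target is to show $H(X \cup \{g\})K = H \cdot (X \cup \{g\}) \cdot K$, which would violate maximality. Via (2.2), this reduces to verifying $X \cup \{g\} \subseteq \Mid_G(H,K)$ (which follows from $X \subseteq \Mid_G(H,K)$, extracted from $HXK = H \cdot X \cdot K$ by (2.2), combined with $g \in X^c \subseteq \Mid_G(H,K)$) and middle-directness of $H(X \cup \{g\})K$, handled by a case split on a hypothetical collision $h_1 x_1 k_1 = h_2 x_2 k_2$: the all-in-$X$ case is absorbed by middle-directness on $X$, the all-$g$ case is trivial, and a mixed case would place $g$ in some $HxK \subseteq HXK$, contradicting the choice of $g$.

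The main obstacle I anticipate is precisely this bookkeeping in (d)$\Rightarrow$(a): one must carefully unpack what each clause of (d) contributes (middle-directness over $X$, the containment $X \subseteq \Mid_G(H,K)$, the covering of $X^c$ by $\Mid_G(H,K)$, and the maximality) and synthesize them so that enlarging $X$ by $g$ preserves full directness. Once these ingredients are aligned, the contradiction with maximality falls out mechanically, completing the chain.
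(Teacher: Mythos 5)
Your proof is correct and follows essentially the same route as the paper: the paper's own (very terse) proof just cites Lemma 2.2, Lemma 2.6, and relation $(2.2)$ together with the two facts that $G=H\cdot X\cdot K$ forces maximality and that the converse holds when $X^c\subseteq\Mid_G(H,K)$ --- which are precisely the ingredients you assemble in your (a)$\Leftrightarrow$(d) step. Your write-up is simply a more detailed, fully checked version of the same argument.
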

\begin{proof}
This is a direct result of Lemma 2.6 and Lemma 2.2, by using the next facts.
If $G=H\cdot X\cdot K$, then $X$ is $($non-empty and$)$ maximal relative to the direct product $H\cdot X\cdot K$. The converse is
true if $X^c\subseteq\Mid_G(H,K)$ (but not in general, of course, note that $HXK=H\cdot X\cdot K$ implies $X\subseteq\Mid_G(H,K)$).
\end{proof}
\begin{cor}
If $H,K$ are subgroups of a finite group $G$ satisfying one of the next
conditions:\\
$($i$)$ $\gcd(|H|,|K|)=1$;\\
$($ii$)$ $H\cap K=\{1\}$, and $H\unlhd G$ or $K\unlhd G$,\\
then $\Mid_G(H,K)=G$ and so
$$
\mbox{$X$ is a complete set of  representatives of double cosets of $(H,K)$}\Leftrightarrow
G=H\cdot X\cdot K \Leftrightarrow G=H\dot{X}K
$$
\end{cor}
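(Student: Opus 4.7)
The plan is to reduce everything to proving the single statement $\Mid_G(H,K)=G$. Once this is in hand, the stated triple equivalence drops out: Lemma 2.2 already identifies being a complete set of double coset representatives with the condition $G=H\dot{X}K$, and the implication (c)$\Rightarrow$(a) of Lemma 2.8 (equivalently (a)$\Leftrightarrow$(c) of Lemma 2.6) upgrades $G=H\dot{X}K$ to $G=H\cdot X\cdot K$ precisely when $\Mid_G(H,K)=G$. So the only real content is showing, under each hypothesis, that every $x\in G$ lies in $\Mid_G(H,K)$. By the chain in (2.3), this is the same as checking $H\cap K^x=\{1\}$, or, equivalently, $H^{x^{-1}}\cap K=\{1\}$.

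For hypothesis (i) I would observe that $H\cap K^x$ is a subgroup of both $H$ and $K^x$, and $|K^x|=|K|$ since conjugation is an order-preserving bijection. Lagrange then forces $|H\cap K^x|$ to divide $\gcd(|H|,|K|)=1$, so the intersection is trivial.

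For hypothesis (ii) I would select the form of the equivalence in (2.3) that matches the normal subgroup. If $K\unlhd G$, then $K^x=K$ and $H\cap K^x=H\cap K=\{1\}$ directly. If instead $H\unlhd G$, then $H^{x^{-1}}=H$, and the alternate form $H^{x^{-1}}\cap K=\{1\}$ of the condition again collapses to the hypothesis $H\cap K=\{1\}$.

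I do not see a genuine obstacle: the corollary is essentially a bookkeeping consequence of (2.3) combined with two standard tricks (order divisibility, and conjugation invariance of a normal subgroup). The only subtlety worth highlighting is to choose, in case (ii), whichever of the two equivalent forms $H\cap K^x$ or $H^{x^{-1}}\cap K$ lets one convert the conjugated subgroup back to its original form so as to invoke $H\cap K=\{1\}$ directly; picking the wrong form would force an unnecessary detour through an element-chasing argument.
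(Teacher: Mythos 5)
Your proposal is correct and is precisely the intended derivation: the paper gives no explicit proof body for this corollary, leaving it to follow from relation $(2.3)$ (via Lagrange for case (i) and conjugation-invariance of the normal subgroup for case (ii)) together with Lemma 2.2 and Lemma 2.6/2.8, which is exactly what you carry out. No gaps.
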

Now, we are ready to introduce and prove a complete algorithm, namely ``Middle Sub-Factor Algorithm (relative to subgroups)''
 (MSFA) for obtaining all the following items:\\
(I) All non-empty maximal subsets $X$ relative to the direct product $H\cdot X\cdot K$, i.e.,
middle sub-factors of $G$ relative to a couple of subgroups.\\
(II) All middle factors of $G$ relative to a couple of subgroups if any exist (i.e., all subsets $X$ satisfying
$G=H\cdot X\cdot K$).\\
(III) All complete sets of representatives of double cosets of a couple of subgroups (i.e.,
all middle transversals) which also are middle factors of $G$ relative to a couple of subgroups).\\
Of course, due to $(2.1)$ and Remark 2.7, the necessary condition for existence of such subsets $X$ is
$\Mid_G(H,K)\neq \emptyset$ (if $\Mid_G(H,K)=\emptyset$, then $\emptyset$ is the only middle sub-factor of $G$ relative to
$(H,K)$). Also, we state a lemma after the following theorem that mentions several necessary and sufficient conditions for
the out put $X=X_{N-1}$ of the algorithm to be a middle factor of $G$.

\begin{theorem}[MSFA]
Let $G$ be a finite group and $H,K$ subgroups of $G$ such that $\Mid_G(H,K)\neq\emptyset$.
Put  $\mathcal{C}^{(-1)}(H,K):=\Mid_G(H,K)$ and fix $g_0\in \mathcal{C}^{(-1)}(H,K)$.
Then, construct the sequences $\{g_n\}_{n\geq 0}$, $\{\mathcal{C}^{(n)}(H,K)\}_{n\geq-1}$, and
 $X_n$ by the relation $(2.5)$. Then, there is the least integer $N\geq 0$ such that $\mathcal{C}^{(N)}(H,K)=\emptyset$
 $($equivalently $H X_{N-1} K=\Mid_G(H,K))$,
 and put $X:=X_{N-1}$.
Also, the product $HXK$ is direct and $X$ is $($non-empty and$)$ maximal relative to $H\cdot X\cdot K$.
Moreover, every maximal subset $\chi$ relative to the direct products $H\cdot \chi\cdot K$ can be obtained from this algorithm.
\end{theorem}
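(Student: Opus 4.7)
The plan is to parallel the four-step structure of the proof of Theorem 2.4 (the MTA), with the ambient set $G$ replaced by $\Mid_G(H,K)$ and the middle-direct condition upgraded to the full direct condition through the equivalence (2.3). The one genuinely new preparatory fact to establish is that $\Mid_G(H,K)$ is closed under left multiplication by $H$ and right multiplication by $K$: if $x\in\Mid_G(H,K)$, $h\in H$, and $h_1(hx)k_1=h_2(hx)k_2$, then $(h_1h)xk_1=(h_2h)xk_2$ forces $h_1h=h_2h$ and $k_1=k_2$, hence $h_1=h_2$. Consequently, since $g_0,\dots,g_N\in\Mid_G(H,K)$, one has $HX_{N-1}K\subseteq\Mid_G(H,K)$, and from the identity $\mathcal{C}^{(N)}(H,K)=\Mid_G(H,K)\setminus HX_{N-1}K$ the announced equivalence $\mathcal{C}^{(N)}(H,K)=\emptyset\Leftrightarrow HX_{N-1}K=\Mid_G(H,K)$ is immediate.

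Termination and directness of $HX_nK$ then transcribe the MTA argument almost verbatim. The chain $\mathcal{C}^{(-1)}\supset\mathcal{C}^{(0)}\supset\cdots$ is strictly decreasing, since each $g_n$ lies in the preceding set but is removed at the next step (being an element of $Hg_nK$), so finiteness of $G$ produces a least $N$. For directness I would induct on $n$: the base $HX_{-1}K=Hg_0K$ is direct because $g_0\in\Mid_G(H,K)$; in the inductive step, $HX_{n-1}K$ direct together with $g_{n+1}\in\mathcal{C}^{(n)}(H,K)$ implies that $Hg_{n+1}K$ is disjoint from $HX_{n-1}K$, so (2.4) yields middle directness of $HX_nK$, and $X_n\subseteq\Mid_G(H,K)$ together with (2.3) upgrades this to directness.

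For the maximality of $X=X_{N-1}$, I would split on whether a candidate $g\in X^c$ lies in $\Mid_G(H,K)$. If $g\notin\Mid_G(H,K)$, then already $H\{g\}K$ fails to be direct, so neither is $H(X\cup\{g\})K$. If $g\in\Mid_G(H,K)$, then the identity $\emptyset=\mathcal{C}^{(N)}(H,K)=\Mid_G(H,K)\setminus HXK$ places $g\in HxK$ for some $x\in X$, and the resulting double-coset collision destroys middle directness, hence directness.

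Completeness mirrors the final step of the MTA proof. Given an arbitrary maximal $\chi=\{x_0,\dots,x_m\}$ with $H\cdot\chi\cdot K$ direct, the equivalence (2.3) forces $\chi\subseteq\Mid_G(H,K)$, so $g_0:=x_0\in\mathcal{C}^{(-1)}(H,K)$ is a legal initial choice, and a short induction lets the algorithm pick $g_{n+1}:=x_{n+1}$ at each stage. The one delicate point --- and the place where I expect the main obstacle to sit --- is verifying that the algorithm cannot continue past step $m$: I would argue that any putative $g\in\mathcal{C}^{(m)}(H,K)$ would satisfy $g\in\Mid_G(H,K)\setminus H\chi K$, and then (2.4) together with $\chi\cup\{g\}\subseteq\Mid_G(H,K)$ and (2.3) would make $H(\chi\cup\{g\})K$ still direct, contradicting the maximality of $\chi$. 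This forces $N=m$ and $X_{N-1}=\chi$.
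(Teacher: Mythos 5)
Your proof is correct and takes essentially the same route as the paper: it reduces everything to the MTA machinery by observing that the present $\mathcal{C}^{(n)}(H,K)$ is the MTA set intersected with $\Mid_G(H,K)$, and it uses the equivalence between directness and ``middle directness plus containment in $\Mid_G(H,K)$'' together with relation $(2.4)$ for the directness, maximality, and completeness steps. The only substantive addition is your explicit verification that $\Mid_G(H,K)$ is closed under left multiplication by $H$ and right multiplication by $K$ (i.e., is a union of $(H,K)$-double cosets), which the paper leaves implicit but which is precisely what justifies the stated equivalence $\mathcal{C}^{(N)}(H,K)=\emptyset\Leftrightarrow HX_{N-1}K=\Mid_G(H,K)$.
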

\begin{proof}
Theorem 2.4 (MTA) guarantees the existence of such number $N$ and requires that the product $HXK$ is direct.
Because $\mathcal{C}^{(n)}(H,K)$ in this algorithm is indeed as the intersection of $\mathcal{C}^{(n)}(H,K)$ in MTA
and $\Mid_G(H,K)$, and $X_n\subseteq \Mid_G(H,K)$ (see $(2.2)$).
Now, if $X:=X_{N-1}$ is not maximal relative to $H\cdot X\cdot K$,
then there exists $g\in X^c$ such that the product  $H(X\cup\{g\})K$ is direct, and so $g\in (HXK)^c$
(by $(2.4)$ and $(2.2)$). This means
$$
g\in \bigcap_{x\in X}(HxK)^c=\bigcap_{i=0}^{N}(Hg_{i}K)^c=\mathcal{C}^{(N)}(H,K)
=\emptyset
$$
that is a contradiction.\\
 Now, let
$\chi=\{x_0,\cdots , x_{m}\}$, for some integer $m\geq 0$, be maximal relative to the direct product $H\cdot \chi\cdot K$.
One can choose $g_0:=x_0$ in the algorithm, because $\chi\subseteq \Mid_G(H,K)$.
Assume that  $g_0:=x_0,\cdots , g_n:=x_n$, for $0\leq n\leq m-1$ (if $m\geq 1$). Since $H (\{x_0,\cdots , x_{n}\}\cup\{x_{n+1}\})K$
is direct and $x_{n+1}\in \{x_0,\cdots , x_{n}\}^c$, $(2.4)$ and $(2.2)$ imply that
$x_{n+1}\in \bigcap_{i=0}^{n}(Hx_{i}K)^c\neq\emptyset$. Thus one can choose
$g_{n+1}:=x_{n+1}$ in the algorithm. This shows that $m\leq N$. But the maximality of
$\chi=\{x_0,\cdots , x_{m}\}$, relative to the direct product, requires that $\chi=X_{N-1}$ and $N=m$.
\end{proof}
The following lemma states that if $\Mid_G(H,K)=G$, then MTA and MSFA are the same for
$(H,K)$, and can be also considered as MFA (i.e., middle factor algorithm). Moreover,
$\Mid_G(H,K)=G$ is the necessary and sufficient condition for existence of the items (II), (III)
in the explanation before MSFA.
\begin{lem}
With the assumptions of MSFA, the followings are equivalent:\\
$($a$)$ $G=H X_{N-1} K$;\\
$($b$)$ $X_{N-1}$ is a complete set of representatives of double coset of $(H,K)$;\\
$($c$)$ $X_{N-1}$ is a middle factor of $G$ relative to $(H,K)$ (i.e., $G=H\cdot X_{N-1}\cdot K$);\\
$($d$)$ $\Mid_G(H,K)=G$;\\
$($e$)$ There exists a middle factor of $G$ relative to $(H,K)$, and every middle factor can be obtained from this algorithm;\\
$($f$)$ Every complete set of representatives of double coset of $(H,K)$ can be obtained from this algorithm;\\
$($g$)$ The set of all outputs of MTA and MSFA are equal.\\
Hence, if $\gcd(|H|,|K|)=1$ $($or the conditions of Corollary 2.9 hold$)$, then all
the above properties hold.
\end{lem}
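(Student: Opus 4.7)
The plan is to orient everything around the pivotal condition (d) $\Mid_G(H,K)=G$, using three ingredients already in hand: MSFA's termination identity $HX_{N-1}K=\Mid_G(H,K)$ together with its guarantee that the product is direct (Theorem 2.10), the equivalences in Lemma 2.2 relating middle direct products to complete sets of double coset representatives, and Lemma 2.6 identifying $\Mid_G(H,K)=G$ with the mere existence of a middle factor.

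First I would establish (a)$\Leftrightarrow$(b)$\Leftrightarrow$(c)$\Leftrightarrow$(d) in one sweep. From Theorem 2.10 the product $HX_{N-1}K$ is already direct, so (a) and (c) state literally the same equality once the direct-product guarantee is substituted; since MSFA terminates exactly when $HX_{N-1}K=\Mid_G(H,K)$, the same substitution gives (a)$\Leftrightarrow$(d). For (b), directness implies middle-directness, so (c) gives $G=H\dot{X_{N-1}}K$ and Lemma 2.2 yields (b); conversely Lemma 2.2 applied to (b) yields $G=HX_{N-1}K$, which is (a).

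Next, assuming (d), the initialization $\mathcal{C}^{(-1)}:=\Mid_G(H,K)$ of MSFA coincides with that of MTA and the recurrence (2.5) is identical, so MSFA and MTA have the same sets of possible runs and outputs, proving (g). By Lemma 2.6, middle factors and complete sets of double coset representatives coincide when (d) holds, so Theorem 2.4's completeness statement for MTA transfers verbatim to both classes, and combined with the fact that MSFA always produces at least one output $X_{N-1}$ this yields (e) and (f). Conversely, (e) already contains the existence of a middle factor, which through Lemma 2.6 forces (d). For (f), given any $g\in G$ I would build an explicit complete set of double coset representatives containing $g$ (use $g$ itself for its own $(H,K)$-double coset and pick arbitrary representatives in every other double coset); this must be an MSFA output, hence contained in $\Mid_G(H,K)$, so $g\in\Mid_G(H,K)$ and (d) follows. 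The identical construction handles (g)$\Rightarrow$(d), since MTA's outputs already exhaust all such representative sets. The concluding sentence on $\gcd(|H|,|K|)=1$ is immediate, as Corollary 2.9 gives (d) directly under either of its hypotheses.

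The main obstacle I anticipate is the direction (f)$\Rightarrow$(d) and its twin (g)$\Rightarrow$(d): one must produce, for each $g\in G$, a genuine complete set of double coset representatives that contains $g$, and then exploit the containment $X_{N-1}\subseteq\Mid_G(H,K)$ built into MSFA. The construction itself is elementary, but the whole argument pivots on this small observation; everything else is routine bookkeeping against Lemmas 2.2, 2.6, 2.8 and Theorems 2.4, 2.10.
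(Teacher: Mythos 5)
Your proposal is correct and uses exactly the ingredients the paper itself cites (relation $(2.2)$, Lemmas 2.2 and 2.6, and Theorems 2.4 and 2.10), so it is essentially the paper's intended argument with the details — which the paper omits entirely — filled in. In particular, your explicit construction of a complete set of representatives containing a given $g\in G$ for the directions (f)$\Rightarrow$(d) and (g)$\Rightarrow$(d), combined with the containment of every MSFA output in $\Mid_G(H,K)$, is precisely the step the paper leaves implicit, and it is sound.
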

\begin{proof}
This is a result of relation $(2.2)$, Lemma 2.6, MTA and MSFA.
\end{proof}
\begin{cor}
If $G$ is abelian, then, for all subgroups $H,K$, we have only the following two cases:\\
$($i$)$ $H\cap K=\{1\}$: all equivalent conditions of Lemma 2.6 are satisfied.\\
$($ii$)$ $H\cap K\neq \{1\}$: MSFA is not applicable, there is not any middle factor of $G$ relative to $(H,K)$,
and only MTA is applicable.
\end{cor}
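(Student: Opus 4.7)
The plan is to leverage two facts already established in the paper: first, the observation immediately after (2.3) that in an abelian group $\Mid_G(A,B)$ is either empty or all of $G$; second, the criterion in (2.3) that $x\in\Mid_G(H,K)$ iff $H\cap K^x=\{1\}$. Since $G$ is abelian, every subgroup is normal, so $K^x=K$ for every $x\in G$. Consequently the condition $H\cap K^x=\{1\}$ collapses to the single (non-)vanishing condition $H\cap K=\{1\}$, independent of $x$. This immediately forces the dichotomy claimed: $\Mid_G(H,K)$ is either all of $G$ (when $H\cap K=\{1\}$) or empty (when $H\cap K\neq\{1\}$).

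For case (i), once one knows $\Mid_G(H,K)=G$, Lemma 2.6 (the equivalence $(a)\Leftrightarrow(d)$ via $\Mid_G(H,K)=G$) yields that every middle transversal is a middle factor, and conversely every equivalent condition listed there is satisfied. Alternatively, one can invoke Corollary 2.9(ii): $H\cap K=\{1\}$ together with $H\unlhd G$ (automatic in the abelian case) gives the result directly.

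For case (ii), from $\Mid_G(H,K)=\emptyset$ one concludes two things. First, the hypothesis $\Mid_G(H,K)\neq\emptyset$ of MSFA is violated, so MSFA is not applicable. Second, by Lemma 2.6 (specifically the implication $(d)\Rightarrow$ the emptiness contradiction), no middle factor of $G$ relative to $(H,K)$ can exist: indeed, if $G=H\cdot X\cdot K$ for some $X$, then $X\subseteq\Mid_G(H,K)=\emptyset$, which together with $G=HXK$ forces $G=\emptyset$, absurd. On the other hand, MTA only requires $G$ to be finite and $H,K$ to be subgroups, with no condition on $\Mid_G(H,K)$, so it remains applicable in this case.

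Essentially no obstacle is expected; the whole statement is a direct corollary of the previously established structural facts. The only point that merits a brief sentence is the reduction $H\cap K^x=H\cap K$ in the abelian setting, which is what triggers the rigid either/or behavior of $\Mid_G(H,K)$.
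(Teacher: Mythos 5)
Your proposal is correct and follows essentially the same route the paper intends: the paper states this corollary without an explicit proof, and the implicit argument is exactly the dichotomy you identify — in an abelian group $K^x=K$, so by $(2.3)$ the set $\Mid_G(H,K)$ is all of $G$ when $H\cap K=\{1\}$ and empty otherwise, after which case (i) follows from Lemma 2.6 (or Corollary 2.9(ii)) and case (ii) from the hypothesis of MSFA together with the fact that $HXK=H\cdot X\cdot K$ forces $X\subseteq\Mid_G(H,K)$. No gaps.
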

The next remark answers the question $(Q_3)$ by using MSFA and MTA (for finite groups, if $G$ is infinite, then
one may use the Zorn's lemma).
\begin{rem}
Consider that we use MSFA, and $X=X_{N-1}$ is an output of it. Then, we face two possible situations:\\
(1) $G=HXK$: we have all the equivalent conditions of Lemma 2.11.\\
(2) $G\neq HXK$: we claim that continuing the algorithm process via the following way, one can reach a complete set of double coset representatives
$X_*\supset X$:\\
Put $\mathcal{C}^{(N)}_*(H,K):=G\setminus HX_{N-1}K$ (instead of $\mathcal{C}^{(N)}(H,K)=\emptyset$), and continue the algorithm process via
relation $(2.5)$ for all $n\geq N+1$ (of course, replacing $\mathcal{C}^{(n)}(H,K)$ by $\mathcal{C}^{(n)}_*(H,K)$). Then there is the least integer
$N_*\geq N+1$ such that $\mathcal{C}^{(N_*)}(H,K)=\emptyset$
 (equivalently $H X_{N_*-1} K=G$), and $X_*:=X_{N_*-1}$ $($with $N_*+1$ elements$)$ is a complete set of double coset representatives of $(H,K)$.
 Because $G\setminus HX_{N-1}K=\bigcap_{i=0}^{N}(Hg_{i}K)^c$ that is the same $\mathcal{C}^{(N)}(H,K)$ in MTA
(although $\mathcal{C}^{(N)}(H,K)=\emptyset$ in MSFA).
\end{rem}
\begin{ex}
If we change $H,K$ in Example 2.5 to
$H=\{1,ab\}$ and $K=\{1,a^3,b,ba^3\}$, then the product $HK$ is direct,
$H\cdot K=\{1,a,a^3,a^4,b,ab,a^3b,a^4b\}\subseteq \Mid_G(H,K)$ and so $(HK)^c=\{a^2,a^5,a^2b,a^5b\}$. It is interesting to know
that $Hx\subset xK$, for all $x\in (HK)^c$. Therefore, $\emptyset\neq \Mid_G(H,K)=HK\neq G$ and one can
use MSFA for this case. Note that $$\mathcal{C}^{(0)}(H,g_0,K)=(Hg_0K)^c\cap \Mid_G(H,K)=\emptyset,$$
for all $g_0\in \mathcal{C}^{(-1)}(H,K)=\Mid_G(H,K)$. Therefore, the product $H\{x\}K$ is maximally direct
(i.e., $\{x\}$ is a middle factor of $D_{12}$ relative to $(H,K)$) if
and only if $x\in \Mid_G(H,K)=HK$. Now, due to Remark 2.13, we can extend such singletons $\{x\}$ to
complete sets of representatives of double cosets (middle transversals). Here $N=0$ and considering $x=g_0=1$,
we have
$$\mathcal{C}^{(0)}_*(H,K):=G\setminus HX_{-1}K=(HK)^c=\{a^2,a^5,a^2b,a^5b\}.$$
Now, if $g_1:=a^2$, then $Ha^2K=(HK)^c$ and so
$$\mathcal{C}^{(1)}_*(H,K)=\mathcal{C}^{(0)}_*(H,K)\cap HK=\emptyset.$$
Therefore $N_*=1$, $X_{N_*-1}=\{1,a^2\}$, and we have
the following middle direct representation: $$D_{12}=\{1,ab\}\dot{\{1,a^2\}}\{1,a^3,b,ba^3\}.$$
Thus $\{1,a^5\}$, $\{1,a^2b\}$ and $\{1,a^5b\}$ are also middle transversals of the couple of subgroups
$$(\{1,ab\},\{1,a^3,b,ba^3\}).$$
\end{ex}

\end{document}